\newcommand{\mymod}[3]{#1 \equiv #2 \kern -0.5em \pmod{#3}}
\newcommand{\mynotmod}[3]{#1 \not \equiv #2 \kern -0.6em \pmod{#3}}
\numberwithin{equation}{section}
\newtheorem{theorem}{Theorem}[section]
\begin{document}
\title[A note on dual third order Jacobsthal vectors]{A note on dual third order Jacobsthal vectors}

\author[G. Cerda-Morales]{Gamaliel Cerda-Morales}  % in alphabetical order

\address{Gamaliel Cerda-Morales \newline
 Instituto de Matem\'aticas, Pontificia Universidad Cat\'olica de Valpara\'iso, Blanco Viel 596, Valpara\'iso, Chile.}
\email{gamaliel.cerda.m@mail.pucv.cl}

%%%%%%%%%%%%%%%%%%%%%%%

\subjclass[2000]{Primary 11B39; Secondary 11R52, 05A15.}
\keywords{Third order Jacobsthal numbers, third order Jacobsthal-Lucas numbers, dual third order Jacobsthal numbers, dual third order Jacobsthal vectors.}

\begin{abstract}
Dual third order Jacobsthal and dual third order Jacobsthal-Lucas numbers are defined. In this study, we work on these dual numbers and we obtain the properties e.g. some quadratic identities, summation, norm, negadual third order Jacobsthal identities, Binet formulas and relations of them. We also define new vectors which are called dual third order Jacobsthal vectors and dual third order Jacobsthal-Lucas vectors. We give properties of these vectors to exert in geometry of dual space.
\end{abstract}

\maketitle

\section{Introduction}
Dual numbers which have lots of applications to modeling plane joint, to screw systems and to mechanics, were first invented by W. K. Clifford in 1873. The dual numbers extend to the real numbers has the form $d=a +\varepsilon b$, where $\varepsilon$ is the dual unit and $\varepsilon^{2}=0$, $\varepsilon\neq0$. The set $\mathbb{D}=\mathbb{R}[\varepsilon]=\{a +\varepsilon b:\ a,b\in \mathbb{R}\}$ is called dual number system and forms two dimensional commutative associative algebra over the real numbers. The algebra of dual numbers is a ring with the following addition and multiplication operations
\begin{equation}
\begin{aligned}
(a_{1}+\varepsilon b_{1})+(a_{2}+\varepsilon b_{2})&=(a_{1}+a_{2})+\varepsilon (b_{1}+b_{2}),\\
(a_{1}+\varepsilon b_{1})\cdot(a_{2}+\varepsilon b_{2})&=a_{1}a_{2}+\varepsilon (a_{1}b_{2}+a_{2}b_{2}).
\end{aligned}\label{ec:1}
\end{equation}

The equality of two dual numbers $d_{1}=a_{1}+\varepsilon b_{1}$ and $d_{2}=a_{a}+\varepsilon b_{2}$ is defined as, $d_{1}=d_{2}$ if and only if $a_{1}=a_{2}$ and $b_{1}=b_{2}$. The division of two dual numbers provided $a_{2}\neq0$ is given by
\begin{equation}\label{ec:2}
\frac{d_{1}}{d_{2}}=\frac{a_{1}}{a_{2}}+\varepsilon \left( \frac{b_{1}a_{2}-a_{1}b_{2}}{a_{2}^{2}}\right).
\end{equation}
The conjugate of the dual number $d=a +\varepsilon b$ is $\overline{d}=a -\varepsilon b$.

Vectors are used to study the analytic geometry of space, where they give simple ways to describe lines, planes, surfaces and curves in space. In this work we will speak on vectors of dual space.

Now, the set $\mathbb{D}^{3}=\{\overrightarrow{a}+\varepsilon \overrightarrow{b}:\ \overrightarrow{a},\overrightarrow{b}\in \mathbb{R}^{3}\}$ is a module on the ring $\mathbb{D}$ which is called $\mathbb{D}$-Module and the members of $\mathbb{D}^{3}$ are called dual vectors consisting of two real vectors. Also a dual vector $\overrightarrow{d}=\overrightarrow{a}+\varepsilon \overrightarrow{b}$ has another expression of the form
\begin{equation}\label{ec:3}
\overrightarrow{d}=(a_{1}+\varepsilon b_{1},a_{2}+\varepsilon b_{2},a_{3}+\varepsilon b_{3})=(d_{1},d_{2},d_{3}),
\end{equation}
where $d_{1}$, $d_{2}$, $d_{3}$ are dual numbers and $\overrightarrow{a}=(a_{1},a_{2},a_{3})$, $\overrightarrow{b}=(b_{1},b_{2},b_{3})$.

The norm of the dual vector $\overrightarrow{d}$ is given by
\begin{equation}\label{ec:4}
\left\Vert\overrightarrow{d}\right\Vert=\left\Vert\overrightarrow{a}\right\Vert +\varepsilon \frac{\langle \overrightarrow{a},\overrightarrow{b} \rangle}{\left\Vert\overrightarrow{a}\right\Vert},
\end{equation}
where $\langle \overrightarrow{a},\overrightarrow{b} \rangle= a_{1}b_{1}+a_{2}b_{2}+a_{3}b_{3}$. Furthermore, $\overrightarrow{d}=\overrightarrow{a}+\varepsilon \overrightarrow{b}$ is dual unit vector (e.g. $\left \Vert \overrightarrow{d}\right \Vert=1$) if and only if $\left\Vert\overrightarrow{a}\right\Vert=1$ and $\langle \overrightarrow{a},\overrightarrow{b} \rangle=0$.

The dual unit vectors are related with oriented lines, found by E. Study, which is called Study mapping: The oriented lines in $\mathbb{R}^{3}$ are in one-to-one correspondence with the points of dual unit sphere in $\mathbb{D}^{3}$.

In the other hand, the Jacobsthal numbers have many interesting properties and applications in many fields of science (see, e.g., \cite{Ba}). The Jacobsthal numbers $J_{n}$ are defined by the recurrence relation
\begin{equation}\label{e1}
J_{0}=0,\ J_{1}=1,\ J_{n+1}=J_{n}+2J_{n-1},\ n\geq1.
\end{equation}
Another important sequence is the Jacobsthal-Lucas sequence. This sequence is defined by the recurrence relation $j_{0}=2,\ j_{1}=1,\ j_{n+1}=j_{n}+2j_{n-1},\ n\geq1$. (see, \cite{Hor3}).

In \cite{Cook-Bac} the Jacobsthal recurrence relation (\ref{e1}) is extended to higher order recurrence relations and the basic list of identities provided by A. F. Horadam \cite{Hor3} is expanded and extended to several identities for some of the higher order cases. In particular, third order Jacobsthal numbers, $\{J_{n}^{(3)}\}_{n\geq0}$, and third order Jacobsthal-Lucas numbers, $\{j_{n}^{(3)}\}_{n\geq0}$, are defined by
\begin{equation}\label{ec:5}
J_{n+3}^{(3)}=J_{n+2}^{(3)}+J_{n+1}^{(3)}+2J_{n}^{(3)},\ J_{0}^{(3)}=0,\ J_{1}^{(3)}=J_{2}^{(3)}=1,\ n\geq0,
\end{equation}
and 
\begin{equation}\label{ec:6}
j_{n+3}^{(3)}=j_{n+2}^{(3)}+j_{n+1}^{(3)}+2j_{n}^{(3)},\ j_{0}^{(3)}=2,\ j_{1}^{(3)}=1,\ j_{2}^{(3)}=5,\ n\geq0,
\end{equation}
respectively.

The following properties given for third order Jacobsthal numbers and third order Jacobsthal-Lucas numbers play important roles in this paper (see \cite{Cer,Cer1,Cook-Bac}). 
\begin{equation}\label{e4}
3J_{n}^{(3)}+j_{n}^{(3)}=2^{n+1},
\end{equation}
\begin{equation}\label{e5}
j_{n}^{(3)}-3J_{n}^{(3)}=2j_{n-3}^{(3)},
\end{equation}
\begin{equation}\label{ec5}
J_{n+2}^{(3)}-4J_{n}^{(3)}=\left\{ 
\begin{array}{ccc}
-2 & \textrm{if} & \mymod{n}{1}{3} \\ 
1 & \textrm{if} & \mynotmod{n}{1}{3}%
\end{array}%
\right. ,
\end{equation}
\begin{equation}\label{e6}
j_{n}^{(3)}-4J_{n}^{(3)}=\left\{ 
\begin{array}{ccc}
2 & \textrm{if} & \mymod{n}{0}{3} \\ 
-3 & \textrm{if} & \mymod{n}{1}{3}\\ 
1 & \textrm{if} & \mymod{n}{2}{3}%
\end{array}%
\right. ,
\end{equation}
\begin{equation}\label{e7}
j_{n+1}^{(3)}+j_{n}^{(3)}=3J_{n+2}^{(3)},
\end{equation}
\begin{equation}\label{e8}
j_{n}^{(3)}-J_{n+2}^{(3)}=\left\{ 
\begin{array}{ccc}
1 & \textrm{if} & \mymod{n}{0}{3} \\ 
-1 & \textrm{if} & \mymod{n}{1}{3} \\ 
0 & \textrm{if} & \mymod{n}{2}{3}%
\end{array}%
\right. ,
\end{equation}
\begin{equation}\label{e9}
\left( j_{n-3}^{(3)}\right) ^{2}+3J_{n}^{(3)}j_{n}^{(3)}=4^{n},
\end{equation}
\begin{equation}\label{e10}
\sum\limits_{k=0}^{n}J_{k}^{(3)}=\left\{ 
\begin{array}{ccc}
J_{n+1}^{(3)} & \textrm{if} & \mynotmod{n}{0}{3} \\ 
J_{n+1}^{(3)}-1 & \textrm{if} & \mymod{n}{0}{3}%
\end{array}%
\right. 
\end{equation}
and
\begin{equation}\label{e12}
\left( j_{n}^{(3)}\right) ^{2}-9\left( J_{n}^{(3)}\right)^{2}=2^{n+2}j_{n-3}^{(3)}.
\end{equation}

Using standard techniques for solving recurrence relations, the auxiliary equation, and its roots are given by 
$$x^{3}-x^{2}-x-2=0;\ x = 2,\ \textrm{and}\ x=\frac{-1\pm i\sqrt{3}}{2}.$$ 

Note that the latter two are the complex conjugate cube roots of unity. Call them $\omega_{1}$ and $\omega_{2}$, respectively. Thus the Binet formulas can be written as
\begin{equation}\label{b1}
J_{n}^{(3)}=\frac{2}{7}2^{n}-\frac{3+2i\sqrt{3}}{21}\omega_{1}^{n}-\frac{3-2i\sqrt{3}}{21}\omega_{2}^{n}
\end{equation}
and
\begin{equation}\label{b2}
j_{n}^{(3)}=\frac{8}{7}2^{n}+\frac{3+2i\sqrt{3}}{7}\omega_{1}^{n}+\frac{3-2i\sqrt{3}}{7}\omega_{2}^{n},
\end{equation}
respectively. 

A variety of new results on Fibonacci-like quaternion and octonion numbers can be found in several papers \cite{Cer,Cim1,Cim2,Hal1,Hal2,Hor1,Hor2,Iye,Ke-Ak,Szy-Wl}. The origin of the topic of number sequences in division algebra can be traced back to the works by Horadam in \cite{Hor1} and by Iyer in \cite{Iye}. Horadam \cite{Hor1} defined the quaternions with the classic Fibonacci and Lucas number components as
\[
QF_{n}=F_{n}+F_{n+1}\textbf{i}+F_{n+2}\textbf{j}+F_{n+3}\textbf{k}
\]
and
\[
QL_{n}=L_{n}+L_{n+1}\textbf{i}+L_{n+2}\textbf{j}+L_{n+3}\textbf{k},
\]
respectively, where $F_{n}$ and $L_{n}$ are the $n$-th classic Fibonacci and Lucas numbers, respectively, and the author studied the properties of these quaternions. Several interesting and useful extensions of many of the familiar quaternion
numbers (such as the Fibonacci and Lucas quaternions \cite{Aky,Hal1,Hor1}, Pell quaternion \cite{Ca,Cim1} and Jacobsthal quaternions \cite{Szy-Wl}) have been considered by several authors.

There has been an increasing interest on quaternions and octonions that play an important role in various areas such as computer sciences, physics, differential geometry, quantum physics, signal, color image processing and geostatics (for more, see \cite{Ad,Car,Go1,Go2,Ko1,Ko2}). For example, in \cite{Cer,Cer1} the author studied the third order Jacobsthal quaternions and give some interesting properties of this numbers.

In this paper, we give some properties and relations of dual third order Jacobsthal and dual third order Jacobsthal-Lucas numbers. We define dual third order Jacobsthal vectors and investigate geometric notions which are created by using dual third order Jacobsthal vectors. 

\section{Dual Third Order Jacobsthal and Dual Third Order Jacobsthal-Lucas Numbers}\label{sect:2}
In this section, we define new kinds of sequences of dual number called as dual third order Jacobsthal numbers and dual third order Jacobsthal-Lucas numbers. We study some properties of these numbers. We obtain various results for these classes of dual numbers included recurrence relations, summation formulas, Binet's formulas and generating functions.

In \cite{Cer}, the author introduced the so-called dual Fibonacci numbers, which are a new class of dual numbers. They are defined by
\begin{equation}\label{eq:1}
FD_{n}=F_{n}+\varepsilon F_{n+1},\ (n\geq 0)
\end{equation}
where $F_{n}$ is the $n$-th Fibonacci number, $\varepsilon^{2}=0$ and $\varepsilon\neq 0$.

We now consider the usual third order Jacobsthal and third order Jacobsthal-Lucas numbers, and based on the definition (\ref{eq:1}) we give definitions of new kinds of dual numbers, which we call the dual third order Jacobsthal numbers and dual third order Jacobsthal-Lucas numbers. In this paper, we define the $n$-th dual third order Jacobsthal number and dual third order Jacobsthal-Lucas number, respectively, by the following recurrence relations
\begin{equation}\label{eq:2}
JD_{n}^{(3)}=J_{n}^{(3)}+\varepsilon J_{n+1}^{(3)},\ n\geq 0
\end{equation}
and
\begin{equation}\label{eq:3}
jD_{n}^{(3)}=j_{n}^{(3)}+\varepsilon j_{n+1}^{(3)},\ n\geq 0,
\end{equation}
where $J_{n}^{(3)}$ and $j_{n}^{(3)}$ are the $n$-th third order Jacobsthal number and third order Jacobsthal-Lucas number, respectively. 

The equalities in (\ref{ec:1}) gives 
\begin{equation}\label{eq:4}
JD_{n}^{(3)}\pm jD_{n}^{(3)}=(J_{n}^{(3)}\pm j_{n}^{(3)})+\varepsilon (J_{n+1}^{(3)}\pm j_{n+1}^{(3)}).
\end{equation}
From the conjugate of a dual number, (\ref{eq:2}) and (\ref{eq:3}) an easy computation gives 
\begin{equation}\label{eq:5}
\overline{JD_{n}^{(3)}}=J_{n}^{(3)}-\varepsilon J_{n+1}^{(3)} ,\ \overline{jD_{n}^{(3)}}=j_{n}^{(3)}-\varepsilon j_{n+1}^{(3)}.
\end{equation}

By some elementary calculations we find the following recurrence relations for the dual third order Jacobsthal and dual third order Jacobsthal-Lucas numbers from (\ref{eq:2}), (\ref{eq:3}), (\ref{eq:4}), (\ref{ec:1}), (\ref{ec:5}) and (\ref{ec:6}):
\begin{equation}
\begin{aligned}
JD_{n+1}^{(3)}+JD_{n}^{(3)}+2JD_{n-1}^{(3)}&=(J_{n+1}^{(3)}+J_{n}^{(3)}+2J_{n-1}^{(3)})+\varepsilon (J_{n+2}^{(3)}+J_{n+1}^{(3)}+2J_{n}^{(3)})\\
&=J_{n+2}^{(3)}+\varepsilon J_{n+3}^{(3)}\\
&=JD_{n+2}^{(3)}\label{equ:3}
\end{aligned} 
\end{equation}
and similarly $jD_{n+2}^{(3)}=jD_{n+1}^{(3)}+jD_{n}^{(3)}+2jD_{n-1}^{(3)}$, for $n\geq1$.

Now, we will state Binet's formulas for the third order Jacobsthal and third order Jacobsthal-Lucas octonions. Repeated use of (\ref{b1}) in (\ref{eq:2}) enables one to write for $\underline{\alpha}=1+2\varepsilon $, $\underline{\omega_{1}}=1+\omega_{1}\varepsilon $ and $\underline{\omega_{2}}=1+\omega_{2}\varepsilon $ 
\begin{equation}
\begin{aligned}
JD_{n}^{(3)}&=J_{n}^{(3)}+\varepsilon J_{n+1}^{(3)}\\
&=\frac{2}{7}2^{n}-\frac{3+2i\sqrt{3}}{21}\omega_{1}^{n}-\frac{3-2i\sqrt{3}}{21}\omega_{2}^{n}\\
&\ \ +\varepsilon \left(\frac{2}{7}2^{n+1}-\frac{3+2i\sqrt{3}}{21}\omega_{1}^{n+1}-\frac{3-2i\sqrt{3}}{21}\omega_{2}^{n+1}\right)\\
&=\frac{2}{7}\underline{\alpha}2^{n}-\frac{3+2i\sqrt{3}}{21}\underline{\omega_{1}}\omega_{1}^{n}-\frac{3-2i\sqrt{3}}{21}\underline{\omega_{2}}\omega_{2}^{n}
\end{aligned} \label{equ:4}
\end{equation}
and similarly making use of (\ref{b2}) in (\ref{eq:3}) yields
\begin{equation}
\begin{aligned}
jD_{n}^{(3)}&=j_{n}^{(3)}+\varepsilon j_{n+1}^{(3)}\\
&=\frac{8}{7}2^{n}+\frac{3+2i\sqrt{3}}{7}\omega_{1}^{n}+\frac{3-2i\sqrt{3}}{7}\omega_{2}^{n}\\
&\ \ + \varepsilon \left(\frac{8}{7}2^{n+1}+\frac{3+2i\sqrt{3}}{7}\omega_{1}^{n+1}+\frac{3-2i\sqrt{3}}{7}\omega_{2}^{n+1}\right)\\
&=\frac{8}{7}\underline{\alpha}2^{n}+\frac{3+2i\sqrt{3}}{7}\underline{\omega_{1}}\omega_{1}^{n}+\frac{3-2i\sqrt{3}}{7}\underline{\omega_{2}}\omega_{2}^{n}.
\end{aligned} \label{equ:5}
\end{equation}
The formulas in (\ref{equ:4}) and (\ref{equ:5}) are called as Binet's formulas for the dual third order Jacobsthal and dual third order Jacobsthal-Lucas numbers, respectively. The recurrence relations for the $n$-th dual third order Jacobsthal number are expressed in the following theorem.

\begin{theorem}\label{th:1}
For $n, m\geq 0$, we have the following identities:
\begin{equation}\label{t1}
JD_{n+2}^{(3)}+JD_{n+1}^{(3)}+JD_{n}^{(3)}=2^{n+1}JD_{2}^{(3)},
\end{equation}
\begin{equation}\label{t2}
JD_{n+2}^{(3)}-4JD_{n}^{(3)}=\left\{ 
\begin{array}{ccc}
1-2\varepsilon  & \textrm{if} & \mymod{n}{0}{3} \\ 
-2+\varepsilon & \textrm{if} & \mymod{n}{1}{3}\\
1+\varepsilon & \textrm{if} & \mymod{n}{2}{3} 
\end{array}%
\right. ,
\end{equation}
\begin{equation}\label{t3}
JD_{n}^{(3)}JD_{m+1}^{(3)}+T_{n-1}^{(3)}JD_{m}^{(3)}+2JD_{n-1}^{(3)}JD_{m-1}^{(3)}=JD_{n+m}^{(3)}+\varepsilon J_{n+m+1}^{(3)},
\end{equation}
\begin{equation}\label{t4}
\left(JD_{n+1}^{(3)}\right)^{2}+\left(JD_{n}^{(3)}\right)^{2}+4JD_{n}^{(3)}JD_{n-1}^{(3)}=JD_{2n+1}^{(3)}+\varepsilon J_{2n+2}^{(3)},
\end{equation}
where $JD_{2}^{(3)}=1+2\varepsilon$ and $T_{n}^{(3)}=JD_{n}^{(3)}+2JD_{n-1}^{(3)}$.
\end{theorem}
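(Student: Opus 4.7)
The plan is to verify each of the four identities by splitting both sides into real and $\varepsilon$-components via $JD_n^{(3)}=J_n^{(3)}+\varepsilon J_{n+1}^{(3)}$, reducing them to scalar identities in the third order Jacobsthal sequence. Parts (\ref{t1}) and (\ref{t2}) will be straightforward, (\ref{t3}) will be the substantive step, and (\ref{t4}) will follow as a corollary of (\ref{t3}).

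For (\ref{t1}), the left-hand side expands to
\[
(J_{n+2}^{(3)}+J_{n+1}^{(3)}+J_n^{(3)})+\varepsilon(J_{n+3}^{(3)}+J_{n+2}^{(3)}+J_{n+1}^{(3)}),
\]
so the claim reduces to $J_{n+2}^{(3)}+J_{n+1}^{(3)}+J_n^{(3)}=2^{n+1}$, which I would obtain from the Binet formula (\ref{b1}) together with the cube-root-of-unity relation $1+\omega_j+\omega_j^2=0$. The right-hand side is then $2^{n+1}(1+2\varepsilon)=2^{n+1}JD_2^{(3)}$, since $JD_2^{(3)}=J_2^{(3)}+\varepsilon J_3^{(3)}=1+2\varepsilon$. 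For (\ref{t2}), I would apply (\ref{ec5}) to each component: the real part yields $J_{n+2}^{(3)}-4J_n^{(3)}$, and the $\varepsilon$-part yields $J_{n+3}^{(3)}-4J_{n+1}^{(3)}$, which is (\ref{ec5}) shifted by one in $n$. Combining the three residue classes $n\equiv 0,1,2\pmod 3$ gives exactly the stated piecewise dual numbers.

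The heart of the theorem is (\ref{t3}). My approach is to multiply out both dual products using (\ref{ec:1}) and the defining relation (\ref{eq:2}), then compare with the right-hand side $J_{n+m}^{(3)}+2\varepsilon J_{n+m+1}^{(3)}$ (obtained by adding $\varepsilon J_{n+m+1}^{(3)}$ to $JD_{n+m}^{(3)}$). The real-component comparison reduces to the scalar convolution identity
\[
J_n^{(3)}J_{m+1}^{(3)}+(J_{n-1}^{(3)}+2J_{n-2}^{(3)})J_m^{(3)}+2J_{n-1}^{(3)}J_{m-1}^{(3)}=J_{n+m}^{(3)},
\]
and the $\varepsilon$-component, after bookkeeping the cross terms, reduces to an analogous identity yielding $2J_{n+m+1}^{(3)}$. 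Both are standard convolution relations for the recurrence (\ref{ec:5}) and can be established either by induction on $m$, with base cases $m=0,1,2$ verified from the initial values and the inductive step powered by $J_{m+2}^{(3)}=J_{m+1}^{(3)}+J_m^{(3)}+2J_{m-1}^{(3)}$, or more elegantly by substituting Binet's formula (\ref{b1}) on both sides and collapsing the coefficients via $\omega_j^n\omega_j^m=\omega_j^{n+m}$, $2^n\cdot 2^m=2^{n+m}$ and $1+\omega_j+\omega_j^2=0$.

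Finally, (\ref{t4}) follows from (\ref{t3}) by the specialization $n\mapsto n+1$, $m\mapsto n$. Expanding the middle term as $T_n^{(3)}JD_n^{(3)}=(JD_n^{(3)}+2JD_{n-1}^{(3)})JD_n^{(3)}=(JD_n^{(3)})^2+2JD_{n-1}^{(3)}JD_n^{(3)}$ and combining with the already-present $2JD_n^{(3)}JD_{n-1}^{(3)}$ produces the factor $4$ on the cross product, giving (\ref{t4}) directly. I anticipate that the main obstacle will be the scalar convolution identity supporting (\ref{t3}); once it is in hand, the remaining bookkeeping with $\varepsilon^2=0$ is routine.
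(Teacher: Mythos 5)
Your proposal is correct and follows essentially the same route as the paper: split each identity into real and $\varepsilon$-components, use $J_{n+2}^{(3)}+J_{n+1}^{(3)}+J_{n}^{(3)}=2^{n+1}$ for (\ref{t1}), apply (\ref{ec5}) componentwise for (\ref{t2}), reduce (\ref{t3}) to the scalar convolution identity of Waddill--Sacks (which the paper simply cites rather than reproving by induction or Binet), and obtain (\ref{t4}) by the specialization $n\mapsto n+1$, $m\mapsto n$. No substantive differences.
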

\begin{proof}
Consider (\ref{eq:2}) and (\ref{eq:4}) we can write
\begin{equation}\label{p1}
JD_{n+2}^{(3)}+JD_{n+1}^{(3)}+JD_{n}^{(3)}=J_{n+2}^{(3)}+J_{n+1}^{(3)}+J_{n}^{(3)}+\varepsilon (J_{n+3}^{(3)}+J_{n+2}^{(3)}+J_{n+1}^{(3)}).
\end{equation}
Using the identity $J_{n+2}^{(3)}+J_{n+1}^{(3)}+J_{n}^{(3)}=2^{n+1}$, the above sum can be calculated as $$JD_{n+2}^{(3)}+JD_{n+1}^{(3)}+JD_{n}^{(3)}=2^{n+1}+2^{n+2}\varepsilon,$$ which can be simplified as $JD_{n+2}^{(3)}+JD_{n+1}^{(3)}+JD_{n}^{(3)}=2^{n+1}(1+2\varepsilon)$. Now, using (\ref{ec5}) and (\ref{eq:2}) we can write $JD_{n+2}^{(3)}-4JD_{n}^{(3)}=1-2\varepsilon$ if $\mymod{n}{1}{3}$ and similarly in the other cases, this proves (\ref{t2}). Now, from the definition of third order Jacobsthal number, dual third order Jacobsthal number in Eq. (\ref{eq:2}), the equations $$\left(J_{n+1}^{(3)}\right)^{2}+\left(J_{n}^{(3)}\right)^{2}+4J_{n}^{(3)}J_{n-1}^{(3)}=J_{2n+1}^{(3)}$$ and $J_{n}^{(3)}J_{m+1}^{(3)}+(J_{n-1}^{(3)}+2J_{n-2}^{(3)})J_{m}^{(3)}+2J_{n-1}^{(3)}J_{m-1}^{(3)}=J_{n+m}^{(3)}$ (see Waddill and Sacks \cite{Wa}), we get
\begin{equation}
\begin{aligned}
JD_{n}^{(3)}JD_{m+1}^{(3)}+&(JD_{n-1}^{(3)}+2JD_{n-2}^{(3)})JD_{m}^{(3)}+2JD_{n-1}^{(3)}JD_{m-1}^{(3)}\\
&=(J_{n}^{(3)}+\varepsilon J_{n+1}^{(3)})(J_{m+1}^{(3)}+\varepsilon J_{m+2}^{(3)})\\
&\ \ +((J_{n-1}^{(3)}+2J_{n-2}^{(3)})+\varepsilon(J_{n}^{(3)}+2J_{n-1}^{(3)}))(J_{m}^{(3)}+\varepsilon J_{m+1}^{(3)})\\
&\ \ +2(J_{n-1}^{(3)}+\varepsilon J_{n}^{(3)})(J_{m-1}^{(3)}+\varepsilon J_{m}^{(3)})\\
&=(J_{n}^{(3)}J_{m+1}^{(3)}+(J_{n-1}^{(3)}+2J_{n-2}^{(3)})J_{m}^{(3)}+2J_{n-1}^{(3)}J_{m-1}^{(3)})\\
&\ \ +\varepsilon (J_{n}^{(3)}J_{m+2}^{(3)}+(J_{n-1}^{(3)}+2J_{n-2}^{(3)})J_{m+1}^{(3)}+2J_{n-1}^{(3)}J_{m}^{(3)})\\
&\ \ +\varepsilon (J_{n+1}^{(3)}J_{m+1}^{(3)}+(J_{n}^{(3)}+2J_{n-1}^{(3)})J_{m}^{(3)}+2J_{n}^{(3)}J_{m-1}^{(3)})\\
&=(J_{n+m}^{(3)}+\varepsilon J_{n+m+1}^{(3)})+\varepsilon J_{n+m+1}^{(3)}\\
&=JD_{n+m}^{(3)}+\varepsilon J_{n+m+1}^{(3)}.
\end{aligned}\label{p2}
\end{equation}
Finally, if we consider first $n=n+1$ and $m=n$ in above result (\ref{p2}), we obtain
\[
\left(JD_{n+1}^{(3)}\right)^{2}+\left(JD_{n}^{(3)}\right)^{2}+4JD_{n}^{(3)}JD_{n-1}^{(3)}=JD_{2n+1}^{(3)}+\varepsilon J_{2n+2}^{(3)},
\]
which is the assertion (\ref{t4}) of theorem.
\end{proof}

The following theorem deals with two relations between the dual third order Jacobsthal and dual third order Jacobsthal-Lucas numbers.
\begin{theorem}\label{th:2}
Let $n\geq 0$ be integer. Then,
\begin{equation}\label{t7}
jD_{n+3}^{(3)}-3JD_{n+3}^{(3)}=2jD_{n}^{(3)},
\end{equation}
\begin{equation}\label{t8}
jD_{n}^{(3)}+jD_{n+1}^{(3)}=3JD_{n+2}^{(3)},
\end{equation}
\begin{equation}\label{t9}
jD_{n}^{(3)}-JD_{n+2}^{(3)}=\left\{ 
\begin{array}{ccc}
1-\varepsilon & \textrm{if} & \mymod{n}{0}{3} \\ 
-1& \textrm{if} & \mymod{n}{1}{3}\\
\varepsilon & \textrm{if} & \mymod{n}{2}{3} 
\end{array}%
\right. ,
\end{equation}
\begin{equation}\label{t10}
jD_{n}^{(3)}-4JD_{n}^{(3)}=\left\{ 
\begin{array}{ccc}
2-3\varepsilon & \textrm{if} & \mymod{n}{0}{3} \\ 
-3+\varepsilon & \textrm{if} & \mymod{n}{1}{3}\\
1+2\varepsilon & \textrm{if} & \mymod{n}{2}{3} 
\end{array}%
\right. .
\end{equation}
\end{theorem}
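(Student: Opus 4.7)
The plan is to reduce each of the four dual identities to the corresponding scalar identity for the ordinary third order Jacobsthal and Jacobsthal-Lucas numbers, since the definitions (\ref{eq:2}) and (\ref{eq:3}) together with the additive structure (\ref{ec:1}) mean that any $\mathbb{Z}$-linear relation holding on the scalar level lifts componentwise to the dual level. Concretely, for any integer coefficients I will write
\begin{equation*}
\alpha\, jD_{n+s}^{(3)} + \beta\, JD_{n+t}^{(3)} = \bigl(\alpha j_{n+s}^{(3)} + \beta J_{n+t}^{(3)}\bigr) + \varepsilon\bigl(\alpha j_{n+s+1}^{(3)} + \beta J_{n+t+1}^{(3)}\bigr),
\end{equation*}
so both the real and dual parts are instances of known scalar identities, only with shifted indices.

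For (\ref{t7}), I apply identity (\ref{e5}) to index $n+3$ for the real part and to $n+4$ for the dual part; the two resulting scalars are $2j_n^{(3)}$ and $2j_{n+1}^{(3)}$, which reassemble into $2jD_n^{(3)}$. For (\ref{t8}), I split $jD_{n}^{(3)}+jD_{n+1}^{(3)}$ into its real part $j_n^{(3)}+j_{n+1}^{(3)}$ and dual part $j_{n+1}^{(3)}+j_{n+2}^{(3)}$ and apply (\ref{e7}) twice to obtain $3J_{n+2}^{(3)}+3\varepsilon J_{n+3}^{(3)}=3JD_{n+2}^{(3)}$. These two cases are clean because the underlying scalar identities hold for all $n$ without case analysis.

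The identities (\ref{t9}) and (\ref{t10}) are more delicate because the scalar forms (\ref{e8}) and (\ref{e6}) depend on $n \bmod 3$, and the dual part involves $n+1$, so the real and dual parts fall into different residue classes. The routine is: fix $n \bmod 3 \in \{0,1,2\}$, read off the real-part value from (\ref{e8}) (resp.\ (\ref{e6})) at index $n$, read off the dual-part value at index $n+1$, and verify that $(\text{real}) + \varepsilon\,(\text{dual})$ matches the stated table. For instance, for (\ref{t9}) with $n\equiv 0\pmod 3$ the real part of $jD_n^{(3)}-JD_{n+2}^{(3)}$ is $j_n^{(3)}-J_{n+2}^{(3)}=1$ by (\ref{e8}) and the dual part is $j_{n+1}^{(3)}-J_{n+3}^{(3)}=-1$ (since $n+1\equiv 1\pmod 3$), giving $1-\varepsilon$; the remaining cases of (\ref{t9}) and all three cases of (\ref{t10}) proceed identically.

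The only genuine obstacle is the bookkeeping for the case splits in (\ref{t9}) and (\ref{t10}): one must not confuse the residue class of $n$ governing the real part with the residue class of $n+1$ governing the dual part. Otherwise the proof is a direct substitution followed by invocation of the scalar identities (\ref{e5})--(\ref{e8}) already recorded in the introduction, with no further machinery required.
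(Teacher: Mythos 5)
Your proposal is correct and follows essentially the same route as the paper: decompose each dual identity into its real and $\varepsilon$-parts via the definitions (\ref{eq:2})--(\ref{eq:3}), invoke the scalar identities (\ref{e5}), (\ref{e7}), (\ref{e8}) and (\ref{e6}) at indices $n$ and $n+1$, and track the shift of residue class modulo $3$ in the $\varepsilon$-part for (\ref{t9}) and (\ref{t10}). Your explicit warning about the real part and dual part falling in different residue classes is exactly the point the paper's own case check relies on.
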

\begin{proof}
The following recurrence relation 
\begin{equation}\label{p3}
jD_{n+3}^{(3)}-3JD_{n+3}^{(3)}=(j_{n+3}^{(3)}-3J_{n+3}^{(3)})+\varepsilon (j_{n+4}^{(3)}-3J_{n+4}^{(3)})
\end{equation}
can be readily written considering that $JD_{n}^{(3)}=J_{n}^{(3)}+\varepsilon j_{n+1}^{(3)}$ and $j_{n}^{(3)}=j_{n}^{(3)}+\varepsilon j_{n+1}^{(3)}$. Notice that $j_{n+3}^{(3)}-3J_{n+3}^{(3)}=2j_{n}^{(3)}$ from (\ref{e5}) (see \cite{Cook-Bac}), whence it follows that (\ref{p3}) can be rewritten as $jD_{n+3}^{(3)}-3JD_{n+3}^{(3)}=2jD_{n}^{(3)}$ from which the desired result (\ref{t7}) of Theorem \ref{th:2}. In a similar way we can show the second equality. By using the identity $j_{n}^{(3)}+j_{n+1}^{(3)}=3J_{n+2}^{(3)}$ we have $$jD_{n}^{(3)}+jD_{n+1}^{(3)}=3(J_{n+2}^{(3)}+\varepsilon J_{n+3}^{(3)}),$$ which is the assertion (\ref{t8}) of theorem.

By using the identity $j_{n}^{(3)}-J_{n+2}^{(3)}=1$ from (\ref{e8}) (see \cite{Cook-Bac}) we have
$$jD_{n}^{(3)}-JD_{n+2}^{(3)}=(j_{n}^{(3)}-J_{n+2}^{(3)})+\varepsilon (j_{n+1}^{(3)}-J_{n+3}^{(3)})=1-\varepsilon
$$
if $\mymod{n}{0}{3}$, the other identities are clear from equation (\ref{e8}). Finally, the proof of Eq. (\ref{t10}) is similar to (\ref{t9}) by using (\ref{e6}).
\end{proof}

Now, we use the notation
\begin{equation}\label{h5}
H_{n}(a,b)=\frac{A\omega_{1}^{n}-B\omega_{2}^{n}}{\omega_{1}-\omega_{2}}=\left\{ 
\begin{array}{ccc}
a & \textrm{if} & \mymod{n}{0}{3} \\ 
b& \textrm{if} & \mymod{n}{1}{3} \\ 
-(a+b)& \textrm{if} & \mymod{n}{2}{3}%
\end{array}%
\right. ,
\end{equation}
where $A=b-a\omega_{2}$ and $B=b-a\omega_{1}$, in which $\omega_{1}$ and $\omega_{2}$ are the complex conjugate cube roots of unity (i.e. $\omega_{1}^{3}=\omega_{2}^{3}=1$). Furthermore, note that for all $n\geq0$ we have 
\begin{equation}
H_{n+2}(a,b)=-H_{n+1}(a,b)-H_{n}(a,b),
\end{equation}
where $H_{0}(a,b)=a$ and $H_{1}(a,b)=b$.

From the Binet formulas (\ref{b1}), (\ref{b2}) and Eq. (\ref{h5}), we have
\begin{equation}
J_{n}^{(3)}=\frac{1}{7}\left(2^{n+1}-V_{n}\right)\ \textrm{and}\ j_{n}^{(3)}=\frac{1}{7}\left(2^{n+3}+3V_{n}\right),
\end{equation}
where $V_{n}=H_{n}(2,-3)$. Then, for $m\geq n$:
\begin{equation}\label{p4}
\begin{aligned}
J_{m}^{(3)}J_{n+1}^{(3)}-J_{m+1}^{(3)}J_{n}^{(3)}&=\frac{1}{49}\left(\begin{array}{ccc}(2^{m+1}-V_{m})(2^{n+2}-V_{n+1})\\
-(2^{m+2}-V_{m+1})(2^{n+1}-V_{n})\end{array}%
\right)\\
&=\frac{1}{49}\left( 
\begin{array}{ccc}
-2^{m+1}V_{n+1}-2^{n+2}V_{m}+2^{m+2}V_{n}+2^{n+1}V_{m+1}\\
+V_{m}V_{n+1}-V_{m+1}V_{n}
\end{array}%
\right)\\
&=\frac{1}{7}\left( 
\begin{array}{ccc} 2^{m+1}U_{n+1}-2^{n+1}U_{m+1}+U_{m-n}
\end{array}%
\right),
\end{aligned}
\end{equation}
where $U_{n+1}=\frac{1}{7}(2V_{n}-V_{n+1})=H_{n+1}(0,1)$ and $V_{n}=H_{n}(2,-3)$. Furthermore, if $m=n+1$ in Eq. (\ref{p4}), we obtain for $n\geq 0$,
\begin{equation}\label{p5}
J_{n+2}^{(3)}J_{n}^{(3)}-\left(J_{n+1}^{(3)}\right)^{2}=\frac{1}{7}\left(2^{n+1}V_{-(n+2)}-1\right),
\end{equation}
where $V_{-n}=U_{n}-2U_{n+2}=H_{n}(2,1)$. Using the above notation, the following theorem investigate a type of Cassini identity for this numbers. 
\begin{theorem}\label{thm:3}
For $n\geq 0$, the Cassini-like identity for dual third order Jacobsthal number $JD_{n}^{(3)}$ is given by
\begin{equation}\label{p6}
JD_{n+2}^{(3)}JD_{n}^{(3)}-\left(JD_{n+1}^{(3)}\right)^{2}=\frac{1}{7}\left(2^{n+1}VD_{-(n+2)}+(-1+\varepsilon(2^{n+2}V_{-(n+2)}+1))\right),
\end{equation}
where $V_{-n}=H_{n}(2,1)$ and $VD_{-n}=V_{-n}+\varepsilon V_{-(n+1)}$.
\end{theorem}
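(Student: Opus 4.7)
The plan is to separate the identity into its real and $\varepsilon$-parts. Expanding each factor via $JD_k^{(3)}=J_k^{(3)}+\varepsilon J_{k+1}^{(3)}$ and using $\varepsilon^2=0$ yields
\begin{equation*}
JD_{n+2}^{(3)}JD_n^{(3)}-\left(JD_{n+1}^{(3)}\right)^2=\left(J_{n+2}^{(3)}J_n^{(3)}-(J_{n+1}^{(3)})^2\right)+\varepsilon\left(J_{n+3}^{(3)}J_n^{(3)}-J_{n+2}^{(3)}J_{n+1}^{(3)}\right),
\end{equation*}
since the two copies of $J_{n+2}^{(3)}J_{n+1}^{(3)}$ cancel. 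On the right-hand side, expanding $VD_{-(n+2)}=V_{-(n+2)}+\varepsilon V_{-(n+3)}$ shows that the target real part is $\tfrac{1}{7}(2^{n+1}V_{-(n+2)}-1)$ and the target $\varepsilon$-part is $\tfrac{1}{7}(2^{n+1}V_{-(n+3)}+2^{n+2}V_{-(n+2)}+1)$. So the problem reduces to verifying these two scalar identities for the ordinary third order Jacobsthal numbers.

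The real-part identity is immediate: it is precisely equation (\ref{p5}), already derived in the excerpt. For the $\varepsilon$-part, I would apply equation (\ref{p4}) with $m=n+2$, which after a sign flip produces
\begin{equation*}
J_{n+3}^{(3)}J_n^{(3)}-J_{n+2}^{(3)}J_{n+1}^{(3)}=\tfrac{1}{7}\bigl(2^{n+1}U_{n+3}-2^{n+3}U_{n+1}-U_{2}\bigr),
\end{equation*}
where $U_k=H_k(0,1)$. It then remains to convert the right-hand side into the claimed form in terms of $V_{-k}=H_k(2,1)$, which I would do using the relation $V_{-k}=U_k-2U_{k+2}$ noted just after (\ref{p5}) together with the recurrence $H_{k+2}(a,b)=-H_{k+1}(a,b)-H_k(a,b)$.

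The main obstacle is this last identification, i.e.\ checking that
\begin{equation*}
2^{n+1}U_{n+3}-2^{n+3}U_{n+1}-U_{2}=2^{n+1}V_{-(n+3)}+2^{n+2}V_{-(n+2)}+1.
\end{equation*}
Because both $U_k$ and $V_{-k}$ are periodic of period three (a consequence of $\omega_1^3=\omega_2^3=1$), the cleanest route is to split into the residue classes $n\equiv 0,1,2 \pmod{3}$ and verify the equality directly using the explicit values $U_k\in\{0,1,-1\}$ and $V_{-k}\in\{2,1,-3\}$ extracted from (\ref{h5}); this amounts to three short arithmetic checks. Alternatively one could plug the Binet formula (\ref{b1}) into $J_{n+3}^{(3)}J_n^{(3)}-J_{n+2}^{(3)}J_{n+1}^{(3)}$ and expand the $\omega_1,\omega_2$ products uniformly, but the case split is more transparent.
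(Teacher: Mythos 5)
Your proposal is correct and follows essentially the same route as the paper's own proof: split into real and $\varepsilon$-parts, identify the real part with (\ref{p5}), obtain the $\varepsilon$-part from (\ref{p4}) with the indices shifted by two, and then convert from $U_k=H_k(0,1)$ to $V_{-k}=H_k(2,1)$. The only difference is that you verify the final identification $2^{n+1}U_{n+3}-2^{n+3}U_{n+1}-U_2=2^{n+1}V_{-(n+3)}+2^{n+2}V_{-(n+2)}+1$ by an explicit check over the residues of $n$ modulo $3$, whereas the paper simply asserts the equivalent relation $U_n-4U_{n+1}=V_{-n}+2V_{-(n+2)}=H_n(-4,5)$; your check confirms it, so the argument is complete.
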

\begin{proof}
From Eqs. (\ref{eq:2}) and (\ref{eq:4}), the identity (\ref{p5}) for third order Jacobsthal numbers and $n=m+2$ in Eq. (\ref{p4}), we get
\begin{align*}
JD_{n+2}^{(3)}JD_{n}^{(3)}-\left(JD_{n+1}^{(3)}\right)^{2}&=\left(J_{n+2}^{(3)}+\varepsilon J_{n+3}^{(3)}\right)\left(J_{n}^{(3)}+\varepsilon J_{n+1}^{(3)}\right)-\left(J_{n+1}^{(3)}+\varepsilon J_{n+2}^{(3)}\right)^{2}\\
&=\left(J_{n+2}^{(3)}J_{n}^{(3)}-\left(J_{n+1}^{(3)}\right)^{2}\right)+\varepsilon \left(J_{n+3}^{(3)}J_{n}^{(3)}-J_{n+1}^{(3)}J_{n+2}^{(3)}\right)\\
&=\frac{1}{7}\left(2^{n+1}V_{-(n+2)}-1\right)+\frac{\varepsilon}{7}\left(2^{n+1}(V_{-n}+2V_{-(n+2)})+1\right),
\end{align*}
where $U_{n}-4U_{n+1}=V_{-n}+2V_{-(n+2)}=H_{n}(-4,5)$. 

Furthermore, using $VD_{-(n+2)}=V_{-(n+2)}+\varepsilon V_{-n}$, we obtain the next result
\begin{align*}
JD_{n+2}^{(3)}JD_{n}^{(3)}-\left(JD_{n+1}^{(3)}\right)^{2}&=\frac{1}{7}\left(2^{n+1}V_{-(n+2)}-1 + 2^{n+1}\varepsilon(V_{-n}+2V_{-(n+2)})+\varepsilon\right)\\
&=\frac{1}{7}\left(2^{n+1}VD_{-(n+2)}+(-1+\varepsilon(2^{n+2}V_{-(n+2)}+1))\right).
\end{align*}
we reach (\ref{p6}).
\end{proof}

\begin{theorem}\label{thm:3}
If $JD_{n}^{(3)}$ is a dual third order Jacobsthal number, then the limit of consecutive quotients of this numbers is 
\begin{equation}\label{p7}
\lim_{n\rightarrow \infty}\frac{JD_{n+1}^{(3)}}{JD_{n}^{(3)}}=\lim_{n\rightarrow \infty}\left(\frac{J_{n+1}^{(3)}+\varepsilon J_{n+2}^{(3)}}{J_{n}^{(3)}+\varepsilon J_{n+1}^{(3)}}\right)=2.
\end{equation}
\end{theorem}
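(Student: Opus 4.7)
The plan is to reduce the dual-valued limit to two ordinary real limits by applying the division rule (\ref{ec:2}). With $a_1 = J_{n+1}^{(3)}$, $b_1 = J_{n+2}^{(3)}$, $a_2 = J_n^{(3)}$, $b_2 = J_{n+1}^{(3)}$, the quotient splits as
\[
\frac{JD_{n+1}^{(3)}}{JD_{n}^{(3)}} = \frac{J_{n+1}^{(3)}}{J_{n}^{(3)}} + \varepsilon \cdot \frac{J_{n+2}^{(3)} J_{n}^{(3)} - \bigl(J_{n+1}^{(3)}\bigr)^{2}}{\bigl(J_{n}^{(3)}\bigr)^{2}},
\]
so it is enough to prove that the real part tends to $2$ and that the $\varepsilon$-coefficient tends to $0$.

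For the real part I would invoke the Binet formula (\ref{b1}). Since $|\omega_{1}| = |\omega_{2}| = 1 < 2$, the leading term $\tfrac{2}{7}\,2^{n}$ dominates, so $J_{n+1}^{(3)}/J_{n}^{(3)} \to 2$ by the standard argument for three-term linear recurrences with a simple dominant root.

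For the dual coefficient, the numerator is exactly the quantity computed by the Cassini-like identity (\ref{p5}), namely $\tfrac{1}{7}\bigl(2^{n+1} V_{-(n+2)} - 1\bigr)$. Since $V_{-n} = H_{n}(2,1)$ is a bounded sequence (by (\ref{h5}) it only cycles through three values), the numerator is $O(2^{n})$, while the same Binet expansion gives $(J_{n}^{(3)})^{2} \sim \tfrac{4}{49}\,4^{n}$. Hence the quotient is $O(2^{n}/4^{n}) = O(2^{-n})$ and vanishes as $n\to\infty$.

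Combining the two pieces yields the claimed limit $2$. The argument is entirely routine; the only step I would call mildly delicate is checking that $V_{-n}$ remains bounded so that the numerator in the Cassini expression really grows no faster than $2^{n}$, but this is immediate from the piecewise definition in (\ref{h5}). Everything else is a standard geometric-growth comparison combined with the formulas already established in Section~\ref{sect:2}.
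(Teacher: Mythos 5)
Your proposal is correct and follows essentially the same route as the paper: both split the dual quotient via the division rule into the real ratio $J_{n+1}^{(3)}/J_{n}^{(3)}\to 2$ plus an $\varepsilon$-coefficient whose numerator is identified with the Cassini-like expression $\tfrac{1}{7}\bigl(2^{n+1}V_{-(n+2)}-1\bigr)$ from (\ref{p5}), and then both argue this coefficient vanishes because that numerator grows like $2^{n}$ while $\bigl(J_{n}^{(3)}\bigr)^{2}$ grows like $4^{n}$. Your direct $O(2^{-n})$ growth comparison is in fact cleaner than the paper's somewhat garbled auxiliary limit computation, but it is the same argument in substance.
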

\begin{proof}
The limit of consecutive quotients of third order Jacosbthal numbers approaches to the radio $\frac{J_{n+1}^{(3)}}{J_{n}^{(3)}}\rightarrow 2$ if $n\rightarrow \infty$ (See \cite{Cook-Bac}). From the previous limit, Eqs. (\ref{eq:2}) and (\ref{p5}), we have
\begin{equation}\label{p8}
\begin{aligned}
\lim_{n\rightarrow \infty}\frac{J_{n+1}^{(3)}+\varepsilon J_{n+2}^{(3)}}{J_{n}^{(3)}+\varepsilon J_{n+1}^{(3)}}&=\lim_{n\rightarrow \infty}\left(\frac{J_{n}^{(3)}J_{n+1}^{(3)}+\varepsilon \left(J_{n+2}^{(3)}J_{n}^{(3)}-\left(J_{n+1}^{(3)}\right)^{2}\right)}{\left(J_{n}^{(3)}\right)^{2}}\right)\\
&=\lim_{n\rightarrow \infty}\frac{J_{n+1}^{(3)}}{J_{n}^{(3)}}+\varepsilon \lim_{n\rightarrow \infty}\left(\frac{2^{n+1}V_{-(n+2)}-1}{7\left(J_{n}^{(3)}\right)^{2}}\right),
\end{aligned}
\end{equation}
where $V_{-n}=H_{n}(2,1)$. In last equality of (\ref{p8}), by using $V_{-n}=H_{n}(2,1)$ (see Eq. (\ref{h5})), $\lim_{n\rightarrow \infty}\frac{2^{n+1}}{7J_{n}^{(3)}}=1$ and $$\lim_{n\rightarrow \infty}\left(\frac{J_{n+2}^{(3)}-2J_{n+1}^{(3)}}{J_{n}^{(3)}}\right)=\lim_{n\rightarrow \infty}\left(\frac{J_{n+2}^{(3)}}{J_{n}^{(3)}}-4\frac{J_{n+1}^{(3)}}{J_{n}^{(3)}}\right)=0,$$ we find zero for the second limit. Thus, the result (\ref{p7}) is true.
\end{proof}

\section{Dual Third Order Jacobsthal Vectors}
A dual vector in $\mathbb{D}^{3}$ is given by $\overrightarrow{d}=\overrightarrow{a}+\varepsilon \overrightarrow{b}$ where $\overrightarrow{a}, \overrightarrow{b}\in \mathbb{R}^{3}$. Now, we will give dual third order Jacobsthal vectors and geometric properties of them. 

A dual third order Jacobsthal vector is defined by
\begin{equation}\label{p9}
\overrightarrow{JD_{n}^{(3)}}=\overrightarrow{J_{n}^{(3)}}+\varepsilon \overrightarrow{J_{n+1}^{(3)}},\ n\geq 0,
\end{equation}
where $\overrightarrow{J_{n}^{(3)}}=\left(J_{n}^{(3)},J_{n+1}^{(3)},J_{n+2}^{(3)}\right)$ and $\overrightarrow{J_{n+1}^{(3)}}=\left(J_{n+1}^{(3)},J_{n+2}^{(3)},J_{n+3}^{(3)}\right)$ are real vectors in $\mathbb{R}^{3}$ with $n$-th third order Jacobsthal number $J_{n}^{(3)}$.

The dual third order Jacobsthal vector $\overrightarrow{JD_{n}^{(3)}}$ is also can be expressed as 
\begin{equation}
\overrightarrow{JD_{n}^{(3)}}=\left(JD_{n}^{(3)},JD_{n+1}^{(3)},JD_{n+2}^{(3)}\right),
\end{equation}
where $JD_{n}^{(3)}$ is the $n$-th dual third order Jacobsthal number. For example, the first three dual third order Jacobsthal vectors can be given easily as 
\begin{equation}
\begin{aligned}
\overrightarrow{JD_{0}^{(3)}}=\left(\varepsilon,1+\varepsilon,1+2\varepsilon\right),\\
\overrightarrow{JD_{1}^{(3)}}=\left(1+\varepsilon,1+2\varepsilon,2+5\varepsilon\right),\\
\overrightarrow{JD_{2}^{(3)}}=\left(1+2\varepsilon,2+5\varepsilon,5+9\varepsilon\right).
\end{aligned}\label{p10}
\end{equation}

Let $\overrightarrow{JD_{n}^{(3)}}$ and $\overrightarrow{JD_{m}^{(3)}}$ be two dual third order Jacobsthal vectors and $\lambda\in \mathbb{R}[\varepsilon]$ be a dual number. Then the product of the dual third order Jacobsthal vector and the scalar $\lambda$ is given by
\begin{equation}\label{p11}
\lambda\cdot \overrightarrow{JD_{n}^{(3)}}=\lambda \overrightarrow{J_{n}^{(3)}}+\varepsilon \lambda \overrightarrow{J_{n+1}^{(3)}}.
\end{equation}
Furthermore, $\overrightarrow{JD_{n}^{(3)}}=\overrightarrow{JD_{m}^{(3)}}$ if and only if $JD_{n}^{(3)}=JD_{m}^{(3)}$, $JD_{n+1}^{(3)}=JD_{m+1}^{(3)}$ and $JD_{n+2}^{(3)}=JD_{m+2}^{(3)}$, where $JD_{n}^{(3)}=J_{n}^{(3)}+\varepsilon J_{n+1}^{(3)}$.

\begin{theorem}\label{th:4}
The dual third order Jacobsthal vector $\overrightarrow{JD_{n}^{(3)}}$ is a dual unit vector if and only if 
\begin{equation}\label{pp12}
3\cdot 2^{2(n+1)}-2^{n+2}U_{n}=5\ \textrm{and}\ 3\cdot 2^{2n+3}-2^{n+1}(U_{n}-U_{n+2})=1,
\end{equation}
where $U_{n}=H_{n}(0,1)$.
\end{theorem}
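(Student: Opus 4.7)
The plan is to apply the characterization of a dual unit vector given in the introduction: $\overrightarrow{JD_{n}^{(3)}}=\overrightarrow{J_{n}^{(3)}}+\varepsilon \overrightarrow{J_{n+1}^{(3)}}$ is a dual unit vector if and only if $\|\overrightarrow{J_{n}^{(3)}}\|=1$ and $\langle\overrightarrow{J_{n}^{(3)}},\overrightarrow{J_{n+1}^{(3)}}\rangle=0$. I will translate each of these two scalar conditions into the stated identities using the compact Binet-type form $J_{n}^{(3)}=\tfrac{1}{7}(2^{n+1}-V_{n})$ with $V_{n}=H_{n}(2,-3)$, which was recorded earlier in the paper.

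First I would expand
\[
\|\overrightarrow{J_{n}^{(3)}}\|^{2}=\sum_{k=0}^{2}(J_{n+k}^{(3)})^{2}=\tfrac{1}{49}\bigl[21\cdot 2^{2n+2}-2^{n+2}(V_{n}+2V_{n+1}+4V_{n+2})+(V_{n}^{2}+V_{n+1}^{2}+V_{n+2}^{2})\bigr].
\]
Since $V_{n}$ cycles through $2,-3,1$ one has $V_{n}^{2}+V_{n+1}^{2}+V_{n+2}^{2}=14$. Next I would observe that $V_{n}=2U_{n+1}-U_{n}$ (check at $n=0,1$, then use the common recurrence $H_{n+2}=-H_{n+1}-H_{n}$) and combine this with $U_{n+2}=-U_{n+1}-U_{n}$ to collapse $V_{n}+2V_{n+1}+4V_{n+2}$ to $7U_{n}$. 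After substitution, $\|\overrightarrow{J_{n}^{(3)}}\|^{2}=1$ becomes exactly $3\cdot 2^{2(n+1)}-2^{n+2}U_{n}=5$.

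Second, I would run the analogous expansion for $\sum_{k=0}^{2}J_{n+k}^{(3)}J_{n+k+1}^{(3)}$. The term $\sum_{k}2^{k+2}V_{k}$ again reduces to $7\cdot 2^{n+2}U_{n}$, while $\sum_{k}2^{k+1}V_{k+1}$ reduces to $7\cdot 2^{n+1}U_{n+1}$ after shifting the same identity by one index. The cross-product sum $V_{n}V_{n+1}+V_{n+1}V_{n+2}+V_{n+2}V_{n+3}$ evaluates to $-6-3+2=-7$ by the same periodicity. Putting it together, the vanishing of the inner product is equivalent to $3\cdot 2^{2n+3}-2^{n+1}(U_{n+1}+2U_{n})=1$, and the elementary translation $U_{n+1}+2U_{n}=U_{n}-U_{n+2}$ (immediate from $U_{n+2}=-U_{n+1}-U_{n}$) delivers the second stated condition.

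The hard part is simply keeping the algebra honest: one must prove, once and cleanly, the four auxiliary identities $V_{n}^{2}+V_{n+1}^{2}+V_{n+2}^{2}=14$, $V_{n}V_{n+1}+V_{n+1}V_{n+2}+V_{n+2}V_{n+3}=-7$, $V_{n}+2V_{n+1}+4V_{n+2}=7U_{n}$, and $U_{n+1}+2U_{n}=U_{n}-U_{n+2}$; each ultimately rests on the three-periodicity arising from $\omega_{1}^{3}=\omega_{2}^{3}=1$. As a sanity check at $n=0$, the vector $\overrightarrow{J_{0}^{(3)}}=(0,1,1)$ satisfies $\|\cdot\|^{2}=2$ and has inner product $3$ with $\overrightarrow{J_{1}^{(3)}}=(1,1,2)$; the derivations above predict $\tfrac{1}{7}(12-0+2)=2$ and $\tfrac{1}{7}(24-2\cdot 1-4\cdot 0-1)=3$, matching exactly and confirming the signs and coefficients.
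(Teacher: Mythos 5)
Your proposal is correct and follows essentially the same route as the paper: both reduce the dual unit condition to $\Vert\overrightarrow{J_{n}^{(3)}}\Vert=1$ and $\langle\overrightarrow{J_{n}^{(3)}},\overrightarrow{J_{n+1}^{(3)}}\rangle=0$ via (\ref{ec:4}), expand using $J_{n}^{(3)}=\tfrac{1}{7}(2^{n+1}-V_{n})$, and invoke the same periodic identities ($V_{n}^{2}+V_{n+1}^{2}+V_{n+2}^{2}=14$, $V_{n}V_{n+1}+V_{n+1}V_{n+2}+V_{n+2}V_{n+3}=-7$, and $V_{n}+2V_{n+1}+4V_{n+2}=7U_{n}$, which is the paper's $7U_{n}=V_{n+1}+3V_{n+2}$ in disguise). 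The numerical check at $n=0$ confirms the coefficients.
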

\begin{proof}
By using the definitions of third order Jacobsthal numbers, Eq. (\ref{p9}) and the identities $V_{n}V_{n+1}+V_{n+1}V_{n+2}+V_{n+2}V_{n+3}=-7$ and $V_{n}^{2}+V_{n+1}^{2}+V_{n+2}^{2}=14$ (see Eq. (\ref{h5})) we get the following statements
\begin{align*}
\left\Vert \overrightarrow{J_{n}^{(3)}} \right\Vert ^{2}&=\left(J_{n}^{(3)}\right)^{2}+\left(J_{n+1}^{(3)}\right)^{2}+\left(J_{n+2}^{(3)}\right)^{2}\\
&=\frac{1}{49}\left( \left(2^{n+1}-V_{n}\right)^{2}+\left(2^{n+2}-V_{n+1}\right)^{2}+\left(2^{n+3}-V_{n+2}\right)^{2}\right)\\
&=\frac{1}{49}\left( 21\cdot 2^{2(n+1)}-2^{n+2}(V_{n}+2V_{n+1}+4V_{n+2})+14\right)\\
&=\frac{1}{7}\left( 3\cdot 2^{2(n+1)}-2^{n+2}U_{n}+2\right)
\end{align*}
and
\begin{align*}
\left\langle \overrightarrow{J_{n}^{(3)}},\overrightarrow{J_{n+1}^{(3)}} \right\rangle &=J_{n}^{(3)}J_{n+1}^{(3)}+J_{n+1}^{(3)}J_{n+2}^{(3)}+J_{n+2}^{(3)}J_{n+3}^{(3)}\\
&=\frac{1}{49}\left(\begin{array}{c} 
(2^{n+1}-V_{n})(2^{n+2}-V_{n+1})+(2^{n+2}-V_{n+1})(2^{n+3}-V_{n+2})\\
+(2^{n+3}-V_{n+2})(2^{n+4}-V_{n+3})
\end{array}\right)\\
&=\frac{1}{49}\left(\begin{array}{c} 
21\cdot 2^{2n+3}-2^{n+1}(4V_{n+3}+10V_{n+2}+5V_{n+1}+2V_{n})\\
+V_{n}V_{n+1}+V_{n+1}V_{n+2}+V_{n+2}V_{n+3}
\end{array}\right)\\
&=\frac{1}{7}\left(3\cdot 2^{2n+3}-2^{n+1}(U_{n}-U_{n+2})-1\right),
\end{align*}
where $7U_{n}=3V_{n+2}+V_{n+1}$, $V_{n}+5V_{n+2}=U_{n}-U_{n+2}$ and $U_{n}=H_{n}(0,1)$. 

Using that $\left \Vert \overrightarrow{JD_{n}^{(3)}}\right \Vert=1$ if and only if $\left\Vert\overrightarrow{J_{n}^{(3)}}\right\Vert=1$ and $\left\langle \overrightarrow{J_{n}^{(3)}},\overrightarrow{J_{n+1}^{(3)}} \right\rangle=0$ (see Eq. (\ref{ec:4})) and above calculations, we easily reach the result (\ref{pp12}).
\end{proof}

Now, if $\overrightarrow{d_{1}}=\overrightarrow{a_{1}}+\varepsilon \overrightarrow{b_{1}}$ and $\overrightarrow{d_{2}}=\overrightarrow{a_{2}}+\varepsilon \overrightarrow{b_{2}}$ are two dual vectors, then the dot product and cross product of them are given respectively by
\begin{equation}
\begin{aligned}
\left\langle \overrightarrow{d_{1}},\overrightarrow{d_{2}} \right\rangle&=\left\langle \overrightarrow{a_{1}},\overrightarrow{a_{2}} \right\rangle+\varepsilon \left(\left\langle \overrightarrow{a_{1}},\overrightarrow{b_{2}} \right\rangle +\left\langle \overrightarrow{b_{1}},\overrightarrow{a_{2}} \right\rangle \right),\\
\overrightarrow{d_{1}} \times \overrightarrow{d_{2}} &=\overrightarrow{a_{1}}\times \overrightarrow{a_{2}} +\varepsilon \left(\overrightarrow{a_{1}}\times \overrightarrow{b_{2}} +\overrightarrow{b_{1}}\times \overrightarrow{a_{2}} \right).
\end{aligned}\label{ec:Gu}
\end{equation}
(For more details, see \cite{Gu}).

\begin{theorem}\label{th:5}
Let $\overrightarrow{JD_{n}^{(3)}}$ and $\overrightarrow{JD_{m}^{(3)}}$ be two dual third order Jacobsthal vectors. The dot product of these two vectors is given by 
\begin{equation}\label{p14}
\begin{aligned}
\left\langle \overrightarrow{JD_{n}^{(3)}},\overrightarrow{JD_{m}^{(3)}} \right\rangle&=\frac{1}{7} \left(\begin{array}{c} 
3\cdot2^{n+m+2}(1+4\varepsilon)-2^{n+1}(UD_{m}+2\varepsilon U_{m})\\
-2^{m+1}(UD_{n}+\varepsilon U_{n})+W_{n-m}(1-\varepsilon)
\end{array}\right),
\end{aligned}
\end{equation}
where $U_{n}=H_{n}(0,1)$, $W_{n}=H_{n}(2,-1)$ and $UD_{n}=U_{n}+\varepsilon U_{n+1}$.
\end{theorem}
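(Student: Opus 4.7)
The plan is to apply the dual inner-product formula~(\ref{ec:Gu}) with $\overrightarrow{a_1} = \overrightarrow{J_n^{(3)}}$, $\overrightarrow{b_1} = \overrightarrow{J_{n+1}^{(3)}}$, $\overrightarrow{a_2} = \overrightarrow{J_m^{(3)}}$, $\overrightarrow{b_2} = \overrightarrow{J_{m+1}^{(3)}}$. This immediately reduces the claim to computing three real $\mathbb{R}^3$-inner products: the base term $\langle \overrightarrow{J_n^{(3)}}, \overrightarrow{J_m^{(3)}}\rangle$ that will give the non-$\varepsilon$ part, and the two shifted inner products $\langle \overrightarrow{J_n^{(3)}}, \overrightarrow{J_{m+1}^{(3)}}\rangle$ and $\langle \overrightarrow{J_{n+1}^{(3)}}, \overrightarrow{J_m^{(3)}}\rangle$, whose sum is the coefficient of $\varepsilon$.

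For the base term $\langle \overrightarrow{J_n^{(3)}}, \overrightarrow{J_m^{(3)}}\rangle = \sum_{k=0}^{2} J_{n+k}^{(3)} J_{m+k}^{(3)}$ I would substitute the closed form $J_n^{(3)} = \tfrac{1}{7}(2^{n+1} - V_n)$ with $V_n = H_n(2,-3)$, expand each product, and regroup into three sums. The pure powers of $2$ contribute $\tfrac{1}{49}(1+4+16)\,2^{n+m+2} = \tfrac{3}{7}\,2^{n+m+2}$. The mixed power-times-$V$ sums collapse via the identity $V_k + 2V_{k+1} + 4V_{k+2} = 7U_k$, the same telescoping already used in the proof of Theorem~\ref{th:4}, producing $-2^{n+1} U_m$ and, symmetrically, $-2^{m+1} U_n$. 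For the purely quadratic piece $\sum_{k=0}^{2} V_{n+k} V_{m+k}$ I would exploit the $3$-periodicity of $V_n$: a direct case split on $n-m \pmod 3$ shows that the sum equals $14$ when $n\equiv m\pmod 3$ and $-7$ in the remaining two cases, and these three values coincide with $7 W_{n-m}$ since $W_r = H_r(2,-1)$ cycles through $(2,-1,-1)$. Assembling the three pieces yields
\begin{equation*}
\langle \overrightarrow{J_n^{(3)}}, \overrightarrow{J_m^{(3)}}\rangle = \tfrac{1}{7}\bigl(3 \cdot 2^{n+m+2} - 2^{n+1} U_m - 2^{m+1} U_n + W_{n-m}\bigr),
\end{equation*}
which is the non-$\varepsilon$ part of (\ref{p14}).

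For the $\varepsilon$ part I would apply the same formula twice, once with $m \mapsto m+1$ and once with $n \mapsto n+1$, and add. The two power-of-$2$ contributions combine to $\tfrac{12}{7}\,2^{n+m+2}$, supplying the $4\varepsilon$ inside $3 \cdot 2^{n+m+2}(1+4\varepsilon)$; the $U$-pieces reassemble into $UD_m = U_m + \varepsilon U_{m+1}$ and $UD_n = U_n + \varepsilon U_{n+1}$ together with the extra $\varepsilon U_m$ and $\varepsilon U_n$ corrections coming from the asymmetric shifts; and the identity $W_{n-m-1} + W_{n-m+1} = -W_{n-m}$, which follows at once from the common $H$-recurrence $H_{k+2} = -H_{k+1} - H_k$, supplies the factor $(1-\varepsilon)$ multiplying $W_{n-m}$. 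The main obstacle is purely bookkeeping: tracking the two asymmetric shifts that feed the $\varepsilon$ coefficient, and recognising that the mixed quadratic sum $\sum V_{n+k} V_{m+k}$ depends only on $n-m \bmod 3$ so that it packages as a single $H$-sequence value.
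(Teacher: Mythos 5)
Your plan is correct and follows essentially the same route as the paper: split via the dual dot product formula~(\ref{ec:Gu}), substitute $J_{k}^{(3)}=\tfrac{1}{7}(2^{k+1}-V_{k})$, collapse the mixed terms with $V_{k}+2V_{k+1}+4V_{k+2}=7U_{k}$, evaluate $\sum_{k=0}^{2}V_{n+k}V_{m+k}=7W_{n-m}$ by $3$-periodicity, and use the recurrence $W_{r-1}+W_{r+1}=-W_{r}$ for the $\varepsilon$ part. One caution on the final bookkeeping: carrying out your two shifted evaluations honestly gives the symmetric $\varepsilon$-coefficient $-2^{n+1}(U_{m+1}+2U_{m})-2^{m+1}(U_{n+1}+2U_{n})$, i.e.\ the result packages as $-2^{n+1}(UD_{m}+2\varepsilon U_{m})-2^{m+1}(UD_{n}+2\varepsilon U_{n})$, so the lone $\varepsilon U_{n}$ in~(\ref{p14}) appears to be a typo in the statement (the paper's own intermediate line $-2^{m+1}(U_{n}-\varepsilon W_{n+1})$ with $U_{n+1}+2U_{n}=-W_{n+1}$ confirms the symmetric form); do not contort your computation to match the asymmetric version.
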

\begin{proof}
If $\overrightarrow{JD_{n}^{(3)}}=\overrightarrow{J_{n}^{(3)}}+\varepsilon \overrightarrow{J_{n+1}^{(3)}}$ and $\overrightarrow{JD_{m}^{(3)}}=\overrightarrow{J_{m}^{(3)}}+\varepsilon \overrightarrow{J_{m+1}^{(3)}}$ are two dual vectors,
then the dot product of them are given respectively by 
\begin{align*}
\left\langle \overrightarrow{JD_{n}^{(3)}},\overrightarrow{JD_{m}^{(3)}} \right\rangle&=\left\langle \overrightarrow{J_{n}^{(3)}},\overrightarrow{J_{m}^{(3)}} \right\rangle +\varepsilon \left( \left\langle \overrightarrow{J_{n}^{(3)}},\overrightarrow{J_{m+1}^{(3)}} \right\rangle +\left\langle \overrightarrow{J_{n+1}^{(3)}},\overrightarrow{J_{m}^{(3)}} \right\rangle \right)\\
&=J_{n}^{(3)}J_{m}^{(3)}+J_{n+1}^{(3)}J_{m+1}^{(3)}+J_{n+2}^{(3)}J_{m+2}^{(3)}\\
&\ \ +\varepsilon \left(\begin{array}{c} 
J_{n}^{(3)}J_{m+1}^{(3)}+J_{n+1}^{(3)}J_{m+2}^{(3)}+J_{n+2}^{(3)}J_{m+3}^{(3)}\\
+J_{n+1}^{(3)}J_{m}^{(3)}+J_{n+2}^{(3)}J_{m+1}^{(3)}+J_{n+3}^{(3)}J_{m+2}^{(3)}
\end{array}\right).
\end{align*}
By using the definition of third order Jacobsthal number (\ref{b1}), the equations (\ref{h5}) and (\ref{p9}), we have 
\begin{align*}
J_{n}^{(3)}&J_{m}^{(3)}+J_{n+1}^{(3)}J_{m+1}^{(3)}+J_{n+2}^{(3)}J_{m+2}^{(3)}\\
&=\frac{1}{49}\left(\begin{array}{c}\left(2^{n+1}-V_{n}\right)\left(2^{m+1}-V_{m}\right)+\left(2^{n+2}-V_{n+1}\right)\left(2^{m+2}-V_{m+1}\right)\\
+\left(2^{n+3}-V_{n+2}\right)\left(2^{m+3}-V_{m+2}\right)
\end{array}\right)\\
&=\frac{1}{49}\left(\begin{array}{c}21\cdot 2^{n+m+2}-2^{n+1}(V_{m}+2V_{m+1}+4V_{m+2})\\
-2^{m+1}(V_{n}+2V_{n+1}+4V_{n+2})+V_{n}V_{m}+V_{n+1}V_{m+1}+V_{n+2}V_{m+2}
\end{array}\right)\\
&=\frac{1}{7}\left(3\cdot2^{n+m+2}-2^{n+1}U_{m}-2^{m+1}U_{n}+W_{n-m}\right),
\end{align*}
where $V_{n+1}+3V_{n+2}=7U_{n}$ and $W_{n}=H_{n}(2,-1)=\omega_{1}^{n}+\omega_{2}^{n}$. Then,
\begin{align*}
\left\langle \overrightarrow{JD_{n}^{(3)}},\overrightarrow{JD_{m}^{(3)}} \right\rangle&=\frac{1}{7}\left(3\cdot2^{n+m+2}-2^{n+1}U_{m}-2^{m+1}U_{n}+W_{n-m}\right)\\
&\ \ +\frac{\varepsilon}{7}\left(3\cdot2^{n+m+4}+2^{n+1}W_{m+1}+2^{m+1}W_{n+1}-W_{n-m}\right)\\
&=\frac{1}{7} \left(\begin{array}{c} 
3\cdot2^{n+m+2}(1+4\varepsilon)-2^{n+1}(U_{m}-\varepsilon W_{m+1})\\
-2^{m+1}(U_{n}-\varepsilon W_{n+1})+W_{n-m}(1-\varepsilon)
\end{array}\right),
\end{align*}
with $U_{n+1}+2U_{n}=-W_{n+1}$, $W_{n}+W_{n+2}=-W_{n+1}$ and $UD_{n}=U_{n}+\varepsilon U_{n+1}$, we easily reach the result (\ref{p14}).
\end{proof}

\begin{theorem}\label{th:6}
For $n, m\geq 0$. Let $\overrightarrow{JD_{n}^{(3)}}$ and $\overrightarrow{JD_{m}^{(3)}}$ be two dual third order Jacobsthal vectors. The cross
product of $\overrightarrow{JD_{n}^{(3)}}$ and $\overrightarrow{JD_{m}^{(3)}}$ is given by
\begin{equation}\label{p15}
\overrightarrow{JD_{n}^{(3)}} \times \overrightarrow{JD_{m}^{(3)}}=\frac{1}{7} \left(\begin{array}{c} 
2^{n+1}(ZD_{m+1}+2\varepsilon Z_{m+1})-2^{m+1}(ZD_{n+1}+2\varepsilon Z_{n+1})\\
+U_{n-m}(1-\varepsilon)(\textbf{i}+\textbf{j}+\textbf{k})
\end{array}\right),
\end{equation}
where $Z_{n}=2U_{n+1}\textbf{i}+W_{n+1}\textbf{j}+U_{n}\textbf{k}$, $U_{n}=H_{n}(0,1)$, $W_{n}=H_{n}(2,-1)$, $\textbf{i}=(1,0,0)$, $\textbf{j}=(0,1,0)$, $\textbf{k}=(0,0,1)$ and $ZD_{n}=Z_{n}+\varepsilon Z_{n+1}$.
\end{theorem}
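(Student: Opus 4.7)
The plan is to reduce the dual cross product to three ordinary cross products via (\ref{ec:Gu}):
\[
\overrightarrow{JD_n^{(3)}} \times \overrightarrow{JD_m^{(3)}} = \overrightarrow{J_n^{(3)}} \times \overrightarrow{J_m^{(3)}} + \varepsilon\bigl(\overrightarrow{J_n^{(3)}} \times \overrightarrow{J_{m+1}^{(3)}} + \overrightarrow{J_{n+1}^{(3)}} \times \overrightarrow{J_m^{(3)}}\bigr),
\]
so that everything rests on a single ``master formula'' for $\overrightarrow{J_n^{(3)}} \times \overrightarrow{J_m^{(3)}}$ in $\mathbb{R}^3$ for arbitrary indices $n,m$.

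To obtain that master formula, I would expand the $3\times 3$ determinant: the $\textbf{i}$-entry is $J_{n+1}^{(3)}J_{m+2}^{(3)} - J_{n+2}^{(3)}J_{m+1}^{(3)}$, the $\textbf{j}$-entry is $J_{n+2}^{(3)}J_m^{(3)} - J_n^{(3)}J_{m+2}^{(3)}$, and the $\textbf{k}$-entry is $J_n^{(3)}J_{m+1}^{(3)} - J_{n+1}^{(3)}J_m^{(3)}$. The $\textbf{i}$- and $\textbf{k}$-entries follow directly from identity (\ref{p4}) (after the shift $(m,n)\mapsto(m+1,n+1)$ for the $\textbf{i}$-entry), together with the antisymmetry $U_{-k} = -U_k$ inherited from $U_n = H_n(0,1)$ and $H_{n+2} = -H_{n+1} - H_n$. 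The $\textbf{j}$-entry involves a double index shift and does not match (\ref{p4}) verbatim, so I would resolve it directly from the Binet representation $J_n^{(3)} = \frac{1}{7}(2^{n+1} - V_n)$. Expansion produces the combinations $V_{k+2} - 4V_k$ and $V_{n+2}V_m - V_n V_{m+2}$; using the period-$3$ structure of the sequences $V,W,U$, one verifies $V_{k+2} - 4V_k = 7W_{k+2}$ and $V_{n+2}V_m - V_n V_{m+2} = 7U_{n-m}$ by running through the three residue classes of the relevant indices modulo $3$. Since the triples $(2U_{k+2}, W_{k+2}, U_{k+1})$ are precisely the $(\textbf{i},\textbf{j},\textbf{k})$-components of $Z_{k+1}$, the three entries consolidate into
\[
\overrightarrow{J_n^{(3)}} \times \overrightarrow{J_m^{(3)}} = \frac{1}{7}\bigl(2^{n+1}Z_{m+1} - 2^{m+1}Z_{n+1} + U_{n-m}(\textbf{i}+\textbf{j}+\textbf{k})\bigr).
\]

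Applying this master formula with the substitutions $m \mapsto m+1$ and $n \mapsto n+1$ and summing gives the coefficient of $\varepsilon$ as
\[
\frac{1}{7}\bigl(2^{n+1}(Z_{m+2} + 2Z_{m+1}) - 2^{m+1}(Z_{n+2} + 2Z_{n+1}) + (U_{n-m-1} + U_{n-m+1})(\textbf{i}+\textbf{j}+\textbf{k})\bigr).
\]
The same recurrence $H_{k+2} = -H_{k+1} - H_k$ collapses $U_{n-m-1} + U_{n-m+1}$ to $-U_{n-m}$. Recognising the identity $ZD_{k+1} + 2\varepsilon Z_{k+1} = Z_{k+1} + \varepsilon(Z_{k+2} + 2Z_{k+1})$, one assembles the real and dual parts to reach (\ref{p15}).

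The main obstacle is the $\textbf{j}$-entry: its index pattern is not covered by (\ref{p4}), and fitting the answer into the uniform $Z_k$ template forces the identification $V_{k+2} - 4V_k = 7W_{k+2}$. This bridge between two different $H_n(a,b)$-sequences is what produces the mixed $U$/$W$ composition of $Z_k$, and is the only step in the argument that is not routine bookkeeping.
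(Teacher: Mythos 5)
Your proposal is correct and follows essentially the same route as the paper: decompose via (\ref{ec:Gu}), establish the single-index cross-product formula $\overrightarrow{J_{n}^{(3)}}\times \overrightarrow{J_{m}^{(3)}}=\frac{1}{7}\left(2^{n+1}Z_{m+1}-2^{m+1}Z_{n+1}+U_{n-m}(\textbf{i}+\textbf{j}+\textbf{k})\right)$ by expanding the determinant with (\ref{p4}), then shift indices and recombine using $U_{n-m-1}+U_{n-m+1}=-U_{n-m}$. Your explicit treatment of the $\textbf{j}$-entry via the Binet form and the identity $V_{k+2}-4V_{k}=7W_{k+2}$ fills in a step the paper merely asserts, but it is a refinement of the same argument, not a different one.
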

\begin{proof}
From the equations (\ref{p9}) and (\ref{ec:Gu}), we get 
\begin{equation}\label{p16}
\overrightarrow{JD_{n}^{(3)}} \times \overrightarrow{JD_{m}^{(3)}}=\overrightarrow{J_{n}^{(3)}}\times \overrightarrow{J_{m}^{(3)}} +\varepsilon \left(\overrightarrow{J_{n}^{(3)}}\times \overrightarrow{J_{m+1}^{(3)}} +\overrightarrow{J_{n+1}^{(3)}}\times \overrightarrow{J_{m}^{(3)}} \right).
\end{equation}
First, let us compute $\overrightarrow{J_{n}^{(3)}}\times \overrightarrow{J_{m}^{(3)}}$, if we use the properties of determinant to calculate the cross product of two vectors, the equality $$J_{n}^{(3)}J_{m+1}^{(3)}-J_{n+1}^{(3)}J_{m}^{(3)}=\frac{1}{7}\left( 
\begin{array}{ccc} 2^{n+1}U_{m+1}-2^{m+1}U_{n+1}+U_{n-m}
\end{array}%
\right)$$ (see \ref{p4}), $U_{n}=H_{n}(0,1)$ and simplify the statements, we find that
\begin{equation}\label{d1}
\begin{aligned}
\overrightarrow{J_{n}^{(3)}}\times \overrightarrow{J_{m}^{(3)}}&=\left\vert
\begin{array}{ccc}
\textbf{i}&\textbf{j}& \textbf{k}\\ 
J_{n}^{(3)}&J_{n+1}^{(3)}& J_{n+2}^{(3)} \\ 
J_{m}^{(3)}&J_{m+1}^{(3)}& J_{m+2}^{(3)}%
\end{array}%
\right\vert\\
&=\textbf{i} \left\vert
\begin{array}{cc}
J_{n+1}^{(3)}& J_{n+2}^{(3)} \\ 
J_{m+1}^{(3)}& J_{m+2}^{(3)}%
\end{array}%
\right\vert-\textbf{j} \left\vert
\begin{array}{cc}
J_{n}^{(3)}& J_{n+2}^{(3)} \\ 
J_{m}^{(3)}& J_{m+2}^{(3)}%
\end{array}%
\right\vert+\textbf{k} \left\vert
\begin{array}{ccc} 
J_{n}^{(3)}&J_{n+1}^{(3)}\\ 
J_{m}^{(3)}&J_{m+1}^{(3)}%
\end{array}%
\right\vert\\
&=\frac{1}{7} \left(\begin{array}{c} 
\textbf{i}(2^{n+2}U_{m+2}-2^{m+2}U_{n+2}+U_{n-m})\\
-\textbf{j}(-2^{n+1}W_{m+2}+2^{m+1}W_{n+2}-U_{n-m})\\
+\textbf{k}(2^{n+1}U_{m+1}-2^{m+1}U_{n+1}+U_{n-m})
\end{array}\right)\\
&=\frac{1}{7} \left(\begin{array}{c} 
2^{n+1}Z_{m+1}-2^{m+1}Z_{n+1}+U_{n-m}(\textbf{i}+\textbf{j}+\textbf{k})
\end{array}\right)
\end{aligned}
\end{equation}
where $Z_{n}=2U_{n+1}\textbf{i}+W_{n+1}\textbf{j}+U_{n}\textbf{k}$, $U_{n}=H_{n}(0,1)$, $W_{n}=H_{n}(2,-1)$, $\textbf{i}=(1,0,0)$, $\textbf{j}=(0,1,0)$ and $\textbf{k}=(0,0,1)$. Putting the equation (\ref{d1}) in (\ref{p16}), and using the definition of third order Jacobsthal numbers, we obtain the result as
\begin{align*}
\overrightarrow{JD_{n}^{(3)}} \times \overrightarrow{JD_{m}^{(3)}}&=\frac{1}{7} \left(\begin{array}{c} 
2^{n+1}Z_{m+1}-2^{m+1}Z_{n+1}+U_{n-m}(\textbf{i}+\textbf{j}+\textbf{k})
\end{array}\right)\\
&+\frac{\varepsilon}{7} \left(\begin{array}{c} 
2^{n+1}Z_{m+2}-2^{m+2}Z_{n+1}+U_{n-m-1}(\textbf{i}+\textbf{j}+\textbf{k})\\
+2^{n+2}Z_{m+1}-2^{m+1}Z_{n+2}+U_{n-m+1}(\textbf{i}+\textbf{j}+\textbf{k})
\end{array}\right)\\
&=\frac{1}{7} \left(\begin{array}{c} 
2^{n+1}(ZD_{m+1}+2\varepsilon Z_{m+1})-2^{m+1}(ZD_{n+1}+2\varepsilon Z_{n+1})\\
+U_{n-m}(1-\varepsilon)(\textbf{i}+\textbf{j}+\textbf{k})
\end{array}\right),
\end{align*}
where $ZD_{m}=Z_{m}+\varepsilon Z_{m+1}$.
\end{proof}

\end{document}